\documentclass[11pt]{article}

\usepackage[top = 3cm, left=2.5cm, right=2.5cm, bottom=3.3cm]{geometry}
\usepackage[english]{babel}
\usepackage[nodayofweek,level]{datetime}
\usepackage{enumitem}   
\usepackage{caption}
\usepackage{subcaption}
\usepackage{graphicx}
\usepackage{xcolor}


\usepackage{amsthm}
\usepackage{amssymb}
\usepackage{amsmath}

\usepackage{algorithmic}
\usepackage[ruled, linesnumbered]{algorithm2e}
\SetKwInput{KwInput}{Input}
\SetKwInput{KwOutput}{Output}

\newcommand\numberthis{\addtocounter{equation}{1}\tag{\theequation}}


\usepackage{fancyhdr}
\usepackage{dirtytalk}


\newtheorem{theorem}{Theorem}[section]

\newtheorem{lemma}[theorem]{Lemma}

\newtheorem{claim}[theorem]{Claim}






\usepackage{float}

\usepackage{hyperref}
\hypersetup{
	colorlinks,
	linkcolor={red!60!black},
	citecolor={green!50!black},
	urlcolor={blue!80!black}
}

\title{Solving Static Permutation Mastermind using \(O(n \log n)\) Queries}

\author{
	Maxime Larcher\thanks{Department of Computer Science, ETH Z\"urich, Switzerland \newline Email: \{\texttt{larcherm}\textbar\texttt{anders.martinsson}\textbar
     	\texttt{steger}\}\texttt{@inf.ethz.ch}}
  	\and
  	Anders Martinsson\footnotemark[1]
  	\and
  	Angelika Steger\footnotemark[1]
}

\begin{document}

\maketitle

\abstract{{Permutation Mastermind} is a version of the classical mastermind game in which the number of positions $n$ is equal to the number of colors $k$, and repetition of colors is not allowed, neither in the codeword nor in the queries.  In this paper we solve the main open question from Glazik, J\"ager, Schiemann and Srivastav (2021), who asked whether their bound of $O(n^{1.525})$ for the static version can be improved to \(O(n \log n)\), which would be best possible. By using a simple probabilistic argument we show that this is indeed the case.
}

\section{Introduction}
\label{sec:intro}

Mastermind is a well known two-player board game, whose commercial version was introduced in the 1970's by Mordecai Meirowitz. The game goes as follows: the first player, named \emph{Codemaker}, secretly chooses a codeword made up of \(4\) pegs, each of \(6\) possible colours. The second player, \emph{Codebreaker}, then tries to guess this codeword in as few queries as possible. A query also consists of \(4\) pegs of \(6\) possible colours, and for each query Codebreaker receives the number of pegs they guessed correctly (black pegs) as well as the number pegs of the correct colour, but not at the right position (white pegs).

In 1977, Knuth~\cite{knuth1976computer} used a minimax argument to prove that five queries were sufficient to find any codeword. Numerous variants of this game have been studied in the literature. Some of the most common variants are: 
\begin{enumerate}[label=(\roman*)]
	\item the number of pegs \(n\) and the number of colours \(k\) may be arbitrary;
	\item in the \emph{black-peg only} setting, Codebreaker only receives the black pegs as answer to their queries;
	\item when \(k \ge n\), one may or may not allow \emph{repetitions} of colours in the codeword and the queries; 
	\item in the \emph{static} setting, Codebreaker needs to decide all queries in advance, before receiving any answer. In the \emph{adaptive} setting, queries may be adapted depending on the answers to previous queries.
\end{enumerate} 

One of the first problem of this type to receive attention is \emph{Coin-Weighing}, a problem introduced in 1960 by Shapiro and Fine~\cite{shapiro1960e1399}, which can be proven equivalent to \(n\)-peg, \(2\)-colour Mastermind. In 1963, Erd\H{o}s and R\'enyi~\cite{erdos1963two} showed via a probabilistic argument that \(O \left(  n / \log n \right)\) queries were sufficient to crack the code. Building up on their idea, Chv\'atal~\cite{chvatal1983mastermind} showed the existence of a strategy for black-peg only Mastermind with repetition, using \(O(n \log k / \log n)\) queries when \(k \le n^{1-\varepsilon}\). This matches the information-theoretic lower bound in both the adaptive and static cases up to a constant factor. 

Determining the minimum number of queries for static, black-peg only Mastermind with repetitions is part of the wider problem of determining the \emph{metric dimension of graphs}. In a recent paper, Jiang and Polyanskii~\cite{jiang2019metric} determined, up to lower order terms, the \emph{metric dimension} of large powers of fixed graphs. Concerning Mastermind, their result imply that for any \emph{constant} number of colours \(k\), the query complexity is \((2 + o(1)) n / \log_k n\). 

For larger \(k\)'s, determining the correct query-complexity turned out to be more delicate. When \(k = n\), Chv\'atal's approach can easily be adapted to find a bound of \(O(n \log n)\), a factor $\log n$ away from the information-theoretic lower bound of $\Omega(n)$ (attributed to Duchet, see~\cite{doerr2016playing}). Doerr, Doerr, Sp\"ohel and Thomas~\cite{doerr2016playing} showed that this was the correct bound (up to constant factors) in the static case. They also presented an adaptive strategy using \(O(n \log \log n)\) queries, by giving a reduction to a sequence of Coin-Weighings problems. The gap to the lower bound was finally closed in a recent paper by Martinsson and Su~\cite{martinsson2020mastermind}, who showed that by first performing a sequence of \(O(n)\) simple ``pre-processing'' queries, Coin-Weighing schemes can be modified to match the information-theoretic lower bound for Mastermind. In the adaptive case, optimal bounds for \(k > n\) can be deduced from this bound, we refer to~\cite{doerr2016playing} and~\cite{martinsson2020mastermind} for the details.

In the case of Permutation Mastermind we assume\ \(k = n\) and allow no repetitions of colors, neither in the codeword nor in the queries. In this setting, one usually views the codeword and the queries as permutations of \([n]\), thus the name. It is trivial to see that black-peg only, and black- and white-peg settings are equivalent in this case. Restricting the codeword to permutations obviously makes it easier for Codebreaker to win, on the other hand this same restriction for queries rules out many of the usual strategies. In the adaptive setting, the best lower- and upper-bounds currently, of order \(\Omega(n)\) and \(O(n \log n)\), are both due to El Ouali, Glazik, Sauerland and Srivastav~\cite{el2018query}. Where the exact query-complexity lies inside this \(\Theta( \log n )\) gap is still an open question. 

Recently, Glazik, J\"ager, Schiemann and Srivastav~\cite{glazik2021bounds} shed some light on \emph{Static Permutation Mastermind}. 
Adapting an argument of Doerr, Doerr, Sp\"ohel and Thomas~\cite{doerr2016playing}, they proved that any strategy requires at least \(\Omega(n \log n)\) queries. Additionally, they show that there exists a strategy using \(O(n^{1.525})\) queries and ask whether it is possible to find a strategy matching their lower bound. In this paper we answer this question positively.

\begin{theorem}
	\label{thm:main_thm}
	For \emph{Static Permutation Mastermind}, there exists a set of \(O(n \log n)\) queries from which any codeword can be recovered.
\end{theorem}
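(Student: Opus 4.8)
The plan is to use the probabilistic method: we will exhibit a set of $m = \Theta(n\log n)$ independent, uniformly random permutations that recovers any codeword with positive probability. It is convenient to encode a permutation $\pi$ of $[n]$ by its permutation matrix $M_\pi$, since then the answer to a query $\sigma$ on a codeword $\pi$ is the entrywise inner product $\langle M_\sigma, M_\pi\rangle = \sum_i \mathbf{1}[\sigma(i)=\pi(i)]$ (black and black/white settings coincide here, so we may count only correctly placed colours). A query set $\sigma_1,\dots,\sigma_m$ fails to determine the codeword precisely when there are distinct permutations $\pi\neq\pi'$ with $\langle M_{\sigma_j}, M_\pi - M_{\pi'}\rangle = 0$ for all $j$. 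The key observation is that whether a query set separates $\pi$ from $\pi'$ depends \emph{only} on the difference matrix $N \eqdef M_\pi - M_{\pi'}$, and --- unlike the number of pairs $(\pi,\pi')$ --- the number of such difference matrices is small: if $\pi$ and $\pi'$ differ in exactly $d$ positions (necessarily $d\ge 2$), then $N$ is specified by two disjoint partial permutation matrices on a common set of $d$ rows and $d$ columns, so there are at most $\binom{n}{d}^2(d!)^2 \le n^{2d}$ of them. Hence it suffices to union bound over difference matrices, organised by the ``level'' $d$.

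The main step is a lower bound on $\Pr_\sigma[\langle M_\sigma, N\rangle \neq 0]$ for uniform random $\sigma$ and a fixed $N$ at level $d$. Writing $\langle M_\sigma, N\rangle = A - B$, where $A$ counts the support positions $i$ with $\sigma(i)=\pi(i)$ and $B$ those with $\sigma(i)=\pi'(i)$, a short direct computation (using that on the support $\pi$ and $\pi'$ are bijections onto the same $d$-set and disagree everywhere) gives the clean identity
\[
	\EE\big[(A-B)^2\big] \;=\; \frac{2d}{n-1},
\]
uniformly over all $N$ at level $d$. For the fourth moment, note that $|A-B|\le Y$, where $Y \eqdef \#\{i : \sigma(i)\in\{\pi(i),\pi'(i)\}\}$; bounding $\EE[Y^4]$ by counting $4$-tuples of positions (each $\sigma(i)$ has only two ``good'' values among $n$, and $\sigma$ is a permutation) yields $\EE[(A-B)^4] \le \EE[Y^4] = O(d/n)$, again uniformly in $d$. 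Combining the two by Cauchy--Schwarz, from $\EE[(A-B)^2] = \EE[(A-B)^2\mathbf{1}_{A\neq B}]$ we obtain
\[
	\Pr_\sigma\big[\langle M_\sigma, N\rangle \neq 0\big] \;\ge\; \frac{\big(\EE[(A-B)^2]\big)^2}{\EE[(A-B)^4]} \;\ge\; \frac{c\,d}{n}
\]
for a universal constant $c>0$, valid for all $2\le d\le n$.

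Finally, fix $m \eqdef \lceil 4c^{-1} n\ln n\rceil = O(n\log n)$ and take $\sigma_1,\dots,\sigma_m$ independent and uniform. For a fixed $N$ at level $d$, independence gives that all queries miss it with probability at most $(1-cd/n)^m \le e^{-cdm/n} \le n^{-4d}$. Union bounding over the at most $n^{2d}$ difference matrices at each level and over $2\le d\le n$,
\[
	\Pr[\text{the query set fails}] \;\le\; \sum_{d=2}^{n} n^{2d}\cdot n^{-4d} \;=\; \sum_{d=2}^{n} n^{-2d} \;<\; 1,
\]
so a set of $O(n\log n)$ queries with the required property exists.

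The step I expect to be the main obstacle is establishing $\EE[(A-B)^4] = O(d/n)$ \emph{uniformly} in $d$, in particular when $d$ is of order $n$, so that $Y$ has constant mean. Obtaining the dependence $O(d/n)$ rather than merely $O(1)$ is essential: an $O(1)$ bound would only give $\Pr_\sigma[\langle M_\sigma, N\rangle \neq 0] \gtrsim (d/n)^2$, which for pairs at distance $d=2$ would force $m = \Omega(n^2\log n)$. Everything else is routine --- the second-moment identity is a bookkeeping computation, the tuple count for $\EE[Y^4]$ is elementary, and the union bound is immediate once the per-query separation probability is in hand.
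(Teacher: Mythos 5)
Your proof is correct, but it takes a genuinely different route from the paper. You union-bound over difference matrices $N = M_\pi - M_{\pi'}$, stratified by the number $d$ of positions where the two codewords disagree, and lower-bound the per-query separation probability by a second-moment/fourth-moment (Paley--Zygmund) argument. The key estimates all check out: the identity $\EE[(A-B)^2]=2d/(n-1)$ (the off-diagonal terms contribute exactly $2d/n(n-1)$ because $\pi|_S$ and $\pi'|_S$ are bijections onto the same $d$-set), the bound $\EE[Y^4]=O(d/n)$ (the tuple count gives $\sum_{r=1}^{4}O(d^r/n^r)=O(d/n)$ since $d\le n$), the count $n^{2d}$ of realizable difference matrices, and the final union bound $\sum_{d\ge 2} n^{2d}n^{-4d}<1$. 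The paper instead proves a stronger combinatorial lemma: for every subset $I$, every valid colouring $v$ of $I$ and every (not necessarily valid) colouring $c$ of $I$ disagreeing with $v$ everywhere, some query in the set is a $0$-query for $v$ but not for $c$; the per-triple probability there is bounded by inclusion--exclusion over the sets $S_i$ of $0$-queries for $v$ hitting $c$ at position $i$, together with estimates on counts of partial derangements. Both arguments share the same skeleton (probabilistic method, union bound organized by a level parameter, with $n^{O(k)}$ events each failing with probability $n^{-4k}$), but the paper's stronger lemma buys an efficient reconstruction procedure --- the codeword is recovered one colour at a time --- whereas your argument only guarantees information-theoretic identifiability, leaving Codebreaker to brute-force a permutation consistent with all answers. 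Since the theorem as stated asks only for the existence of a query set from which the codeword can be recovered, your proof is a complete, and arguably more self-contained, proof of it.
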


Our proof uses the probabilistic method. We show that with positive probability, a random sequence of \(28 n \log n\) queries chosen uniformly at random has the property that it can uniquely determine all codewords, hence there are deterministic sequences with this property. 

Additionally, one easily observes that {\em given such a good sequence}, our proof of Theorem~\ref{thm:main_thm} can be turned into a simple algorithm which recovers the codeword \emph{efficiently} from such a sequence (by finding one colour at a time in \(n\) turns). This contrasts with Chv\'atal's original result for $k\leq n^{1-\varepsilon}$, where no corresponding efficient reconstruction scheme has been presented in the literature, leaving codebreaker to brute-force a codeword that matches all queries. In the case of static classical Mastermind with \(k = n\), our approach could be adapted have an efficient reconstruction algorithm, but we will not elaborate on this further.

For a quick overview of the state of the art on Mastermind, we provide the best known lower- and upper-bounds for the case \(k = n\) in Table~\ref{tab:summary}. The rest of this paper is dedicated to the proof of Theorem~\ref{thm:main_thm}. 

\begin{table}[H]
	\begin{center}
		\begin{tabular}{ l | c | c }
									& Lower bound 									& Upper bound\\
			\hline
			Adaptive Classical 		& \(\Omega(n)\) (see~\cite{doerr2016playing})	& \(O(n)\)~\cite{martinsson2020mastermind} \\ 
			Static Classical 		& \(\Omega(n \log n)\)~\cite{doerr2016playing} 	& \(O(n \log n)\)~\cite{chvatal1983mastermind} \\
			\hline
			Adaptive Permutation 	& \(\Omega(n)\)~\cite{el2018query}				& \(O(n \log n)\)~\cite{el2018query} \\
			Static Permutation 		& \(\Omega(n \log n)\)~\cite{glazik2021bounds}	& \(O(n \log n)\) (our paper) \\
		\end{tabular}
	\end{center}
	\caption{Summary of results for variations of Mastermind with $n$ colours and pegs. \label{tab:summary} }
\end{table}

\section{Proof of the Theorem}
\label{sec:proof}

We use standard notations. By \(\log\) we denote the natural logarithm. For the sake of conciseness we omit floor and ceil signs. Whenever it is needed, we assume that \(n\) is large enough. All permutations are assumed to be over \([n] = \{1, \dots, n\}\). 

Our proof of Theorem~\ref{thm:main_thm} is probabilistic and in certain ways resembles the original proof of Chv\'atal. Consider a set of queries \(Q\) of size $q(n)$ chosen independently and uniformly at random from all permissible queries. If we could show that it allows to decode a randomly chosen codeword with probability \(1 - o(1/n!) \), then a simple union bound argument allows us to conclude that there has to {\em exist} such a set of queries of size $q(n)$ that allows to decode {\em all} codewords.

Now, as it turns out, a success probability $1-o(1/n!)$ in this case is too optimistic. For instance, with probability $1/poly(n)$ a collection of $O(n \log n)$ random queries will never query colors $1$ or $2$ in positions $1$ or $2$, in which case, distinguishing permutations beginning with $12$ and $21$ is impossible. Instead, to get the union bound to follow through, we need a more refined picture of what ``bad events'' could cause the recovery to fail. This is made formal in Lemma~\ref{lem:existence_of_set}; first, we introduce some definitions. 

Given a set \(I \subseteq [n]\), we say that \(c \in [n]^I\) is a \emph{colouring} of \(I\). We say that it is a \emph{valid colouring} of \(I\) if \(c(i) \ne c(j)\) for all \(i \ne j\), in particular \(c\) may be viewed as the restriction of a permutation to \(I\). Given a permutation \(\sigma\) and a colouring \(c\), we say that \(\sigma\) is a \emph{\(0\)-query} for \(c\) if \(\sigma(i) \ne c(i)\) for all \(i \in I\). Given a permutation  \(\sigma\) and two colourings \(c_1, c_2\), we say that \emph{\(\sigma\) discriminates \(c_1\) from \(c_2\)} on \(I\) if \(\sigma\) is a \(0\)-query for \(c_1\) but not for \(c_2\). The following lemma is the core of our argument.

\begin{lemma}
	\label{lem:existence_of_set}
	There exists a set \(Q\) of \(28 n \log n\) permutations such that the following holds. For any \(I \subseteq [n]\), any valid colouring \(v\) and any colouring \(c\) on \(I\) such that \(v(i) \ne c(i)\) for all \(i \in I\), there exists some \(q \in Q\) which discriminates \(v\) from \(c\).
\end{lemma}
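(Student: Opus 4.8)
The plan is to use the probabilistic method. Let $N:=28n\log n$ and let $Q=\{q_1,\dots,q_N\}$ consist of $N$ independent uniformly random permutations of $[n]$; since there are only finitely many triples $(I,v,c)$, it suffices to show that with positive probability every triple is discriminated, and for this I will union-bound over triples. Fix a triple $(I,v,c)$ with $|I|=t$, and let $p_{I,v,c}$ be the probability that one uniformly random permutation discriminates $v$ from $c$ on $I$; the probability that none of the $q_j$ does is then $(1-p_{I,v,c})^N\le e^{-p_{I,v,c}N}$. There are at most $\binom nt\cdot n^t\cdot n^t\le(en^3/t)^t$ triples with $|I|=t$ (choose $I$, then a valid $v$, then $c$), so the statement follows once one proves
\[
p_{I,v,c}\ \ge\ \frac{t}{9n}\qquad\text{for every triple }(I,v,c),
\]
because then the failure probability is at most $\sum_{t\ge1}(en^3/t)^t e^{-28t(\log n)/9}=\sum_{t\ge1}\bigl(e\,n^{-1/9}/t\bigr)^t$, which tends to $0$; the constant $28$ is picked precisely so that the exponent $3-\tfrac{28}{9}=-\tfrac19$ of $n$ is negative.

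It remains to establish $p_{I,v,c}\ge t/(9n)$. Whether a permutation discriminates $v$ from $c$ depends only on its values on $I$, and the restriction of a uniform permutation to $I$ is a uniform injection $\phi\colon I\to[n]$; so $p_{I,v,c}$ is the probability that $\phi$ avoids $v$ (i.e.\ $\phi(i)\ne v(i)$ for all $i\in I$) but agrees with $c$ at some $i\in I$, which equals $D_1-D_2$ where $D_1:=\Pr[\phi\text{ avoids }v]$ and $D_2:=\Pr[\phi(i)\notin\{v(i),c(i)\}\ \forall i\in I]$ (the forbidden pair $\{v(i),c(i)\}$ having size $2$ by hypothesis). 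For $t\le n/2$ a short inclusion--exclusion suffices: keeping two terms, $p_{I,v,c}\ge\sum_{i\in I}\Pr[\phi\text{ avoids }v,\ \phi(i)=c(i)]-\binom t2\tfrac1{n(n-1)}$, and conditioning on $\phi(i)=c(i)$ (which already forces $\phi(i)\ne v(i)$) plus a union bound over the remaining $t-1$ positions gives $\Pr[\phi\text{ avoids }v,\ \phi(i)=c(i)]\ge\tfrac1n\bigl(1-\tfrac{t-1}{n-1}\bigr)$; altogether $p_{I,v,c}\ge\tfrac{t(2n-3t+1)}{2n(n-1)}\ge\tfrac t{4n}$ when $t\le n/2$, which is more than enough.

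The delicate regime is $t>n/2$, where these elementary inclusion--exclusion bounds become worthless; there $t/(9n)\le\tfrac19$, so it suffices to show $p_{I,v,c}\ge c_0$ for an absolute constant $c_0>\tfrac19$. For this I would use $D_1\ge e^{-t/n}-o(1)$ together with $D_2\le e^{-2t/n}$, which give $p_{I,v,c}\ge e^{-t/n}(1-e^{-t/n})-o(1)\ge 0.22$ for $n$ large, since $e^{-x}(1-e^{-x})\ge e^{-1}(1-e^{-1})>0.23$ on $(\tfrac12,1]$. The bound $D_2\le(1-\tfrac2n)^t\le e^{-2t/n}$ would follow from the negative association of the indicators $\mathbf 1[\phi(i)=a]$ of the random injection $\phi$ (each event $\{\phi(i)\notin\{v(i),c(i)\}\}$ has probability $\tfrac{n-2}{n}$), or equivalently from the Br\'egman--Minc permanent inequality applied to the $0$--$1$ matrix in which exactly these two cells of each row $i\in I$ are forbidden, followed by a Stirling estimate. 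For the lower bound on $D_1$ one expands $D_1=\sum_{k\ge0}(-1)^k\binom tk\frac{(n-k)!}{n!}$, checks (using $t\le n$) that the summands $\binom tk\frac{(n-k)!}{n!}$ are non-increasing in $k$, and truncates by Bonferroni at $k=5$: $D_1\ge\sum_{k=0}^5(-1)^k\binom tk\frac{(n-k)!}{n!}=e^{-t/n}-O(1/n)-O((t/n)^6)\ge e^{-t/n}-\tfrac1{100}$ for $n$ large.

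The one genuinely non-elementary input is this bound $D_2\le e^{-2t/n}$ in the regime $|I|=\Theta(n)$ — that a random permutation simultaneously avoids two prescribed colours at each of linearly many positions with essentially the ``independent'' probability — which is precisely a negative-association (equivalently, Br\'egman--Minc permanent) statement; everything else in the argument is inclusion--exclusion and a union bound.
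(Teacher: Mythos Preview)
Your proof is correct, and its scaffolding matches the paper's: choose $28n\log n$ uniformly random permutations, stratify the union bound by $|I|=t$, and reduce everything to a lower bound $p_{I,v,c}\ge\Omega(t/n)$ on the probability that one random permutation discriminates a fixed triple.

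Where you diverge is in establishing that lower bound. The paper does it in one stroke, for all $t$ simultaneously, via the same two-term inclusion--exclusion $|\bigcup_i S_i|\ge\sum_i|S_i|-\sum_{i<j}|S_i\cap S_j|$ that you use for small $t$, but with sharper estimates: by a swapping/double-counting argument they show $|S_i|\ge(1-o(1))A(n,t)/n$ and $|S_i\cap S_j|\le(1+o(1))A(n,t)/(n(n-1))$, where $A(n,t)$ is the number of $0$-queries for $v$. Plugging these in gives $|\bigcup S_i|\ge\frac{t}{n}A(n,t)\bigl(1-\tfrac{t-1}{2(n-1)}\bigr)-o(\cdot)$, and since the bracket is always at least $\tfrac12$ and $A(n,t)\ge n!/3$, the bound $p\ge t/(7n)$ follows uniformly. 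Your crude estimate $|S_i|\ge\tfrac{(n-1)!}{1}\bigl(1-\tfrac{t-1}{n-1}\bigr)$ collapses when $t$ is close to $n$, which is why you are forced into a second regime; there you instead write $p=D_1-D_2$ and control $D_2$ through negative association (or Br\'egman--Minc). That works, but imports a nontrivial external tool, whereas the paper's swapping argument keeps everything self-contained and avoids the case split entirely. Conversely, your route makes the intuition ``$D_1\approx e^{-t/n}$, $D_2\approx e^{-2t/n}$'' explicit, which is a nice probabilistic picture the paper's counting hides.
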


We emphasise that in the above lemma \(c\) can be \emph{any} colouring while \(v\) needs be a valid one. We defer the proof of this lemma to the end of this section and now prove Theorem~\ref{thm:main_thm}.

\begin{proof}[Proof of Theorem~\ref{thm:main_thm}]
	\label{prf:main_thm_proof}
	Let \(Q\) be the set as in Lemma~\ref{lem:existence_of_set}. Our strategy to recover any codeword \(c_0\) is to find the colours one by one until we have found them all. We let \(I\) be the set of pegs for which we have \emph{not} found the correct colour; at the beginning \(I = [n]\).
	
	At any step when \(|I| \ge 1\), because we know the correct colours outside of \(I\), we can compute for each query \(q \in Q\) how many positions \(c_0\) and \(q\) colour identically in \(I\). Let \(Q_I = \{q_1, \dots, q_l\} \subseteq Q\) be the set of \(0\)-queries for \(c_0\) on \(I\). We claim that there must exist some \(i \in I\) such that \(\{c_0(i), q_1(i), \dots, q_l(i)\} = [n]\). Indeed, if this were not the case, then there would exist some colouring \(c\) (which need not be valid) such that \(c(i) \notin \{c_0(i), q_1(i), \dots, q_l(i)\}\) for all $i\in I$.  However, this would imply that no \(q \in Q_I\) (and hence, in \(Q\)) discriminates \(c_0\) (or rather its restriction to \(I\)) from \(c\) on \(I\), and this contradicts the definition of \(Q\). In consequence, such an \(i \in I\) exists, and we can recover \(c_0(i)\): it is the unique element of \([n]\setminus \{q_1(i), \dots, q_l(i)\}\). We remove this \(i\) from \(I\) and continue until \(I = \emptyset\).
\end{proof}

All that remains is now to prove Lemma~\ref{lem:existence_of_set}.

\begin{proof}[Proof of Lemma~\ref{lem:existence_of_set} ]
	The proof is probabilistic: we choose independently and uniformly at random a set of \(28 n \log n\) queries and show that with positive probability, this set satisfies the conditions.
	
	To do this, we wish to apply union-bound over all \((I, v, c)\). A rough application will not work: indeed there are roughly \(n^{(2 + o(1))n}\) choices of triple \((I,v,c)\), but the probability of failing to discriminate a certain triple may be as large as $1/poly(n)$ if $I$ is small. To solve this problem, we partition the set of triples \((I,v,c)\) depending on the size of \(I\): when \(k = |I|\) is small, there are fewer choices for \((I, v, c)\) so it is fine if the probability of discriminating is smaller. 
	
	Concretely, assume that for any fixed triple \((I, v, c)\) as in the statement, a random colouring has probability at least \(|I|/7n\) of discriminating \(v\) from \(c\). Then the probability that none of the \(28 n \log n\) random queries discriminate is at most 
		\begin{align*}
			(1 - |I| / 7n)^{28 n \log n} \le e^{-4 |I| \log n} = n^{-4|I|}.
		\end{align*}
	When \(|I| = k\), a crude upper bound for the choice of \((I,v,c)\) is \(n^{3k}\) in total. Hence by union bound, the probability that there exists a triple \((I, v, c)\) with \(I \ne \emptyset\) for which no colouring is discriminating is at most
		\begin{align*}
			\sum_{k = 1}^{n}{ \sum_{|I| = k}{ \sum_{v, c}{n^{-4 |I|} } } } \le \sum_{k = 1}^{n}{n^{3k} n^{-4 k} } < 1,
		\end{align*}
	whenever \(n \ge 2\).
	
	Therefore, all we need to do is prove that, indeed, for any fixed \((I, v, c)\) as above, a uniformly random permutation has probability at least \(|I| /7 n\) of discriminating. To bound this, we denote by \(S_i\) the set of those permutations \(\sigma\) which are \(0\)-queries for \(v\) and such that \(\sigma(i) = c(i)\); we now want to show that the size of \(\bigcup_{i \in I}{S_i}\) is at least \(|I|(n-1)! / 7\).
	Using inclusion-exclusion, we get 
	\begin{align*}
		\left| \bigcup_{i \in I}{ S_i } \right| \ge \sum_{i \in I}{ |S_i| } - \sum_{i \ne j \in I}{ |S_i \cap S_j| }. \numberthis \label{eq:in_ex_expression_usi}
	\end{align*} 
	
	To estimate this quantity we shall compute the sizes of \(S_i\) and \(S_i \cap S_j\). We express them in terms of \(A(n, |I|)\) which we define as the number of \(0\)-queries of \(v\) on \(I\). If we let \(m = |I|\), write \(\mathfrak{S}_n\) for the set of all permutations on \([n]\) and \(T_i\) for the set of permutations \(\sigma\) such that \(\sigma(i) = v(i)\), then a simple application of the inclusion-exclusion principle gives
	\begin{align*}
		A(n, m) = \left| \mathfrak{S}_n \setminus \bigcup_{i \in I}{T_i} \right| = \sum_{k = 0}^{m}{ \left( (-1)^k {m \choose k} (n-k)! \right) }.
	\end{align*}

	\begin{claim}
		\label{clm:A}
		For all \(m \le n\) we have 
		\begin{align*}
			n! / 3 \le A(n, m) \le n!.
		\end{align*}
	\end{claim}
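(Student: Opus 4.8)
The quantity $A(n,m)$ is the number of permutations of $[n]$ avoiding $m$ forbidden values at $m$ fixed positions — essentially a ``rook polynomial'' count, or the number of permutations with no fixed points among a designated set of $m$ coordinates. I'd work directly from the inclusion–exclusion formula $A(n,m)=\sum_{k=0}^m (-1)^k \binom{m}{k}(n-k)!$. The upper bound $A(n,m)\le n!$ is immediate: $A(n,m)$ counts a subset of $\mathfrak{S}_n$. For the lower bound I'd show $A(n,m)\ge n!/3$, which I expect follows from showing the alternating sum is dominated by its first term $n!$ with the tail contributing at most a controlled fraction.

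Let me write the plan more concretely. Set $a_k=\binom{m}{k}(n-k)!$, so $A(n,m)=\sum_{k\ge 0}(-1)^k a_k$. I'd first verify that the terms $a_k$ are decreasing in $k$ (for $0\le k\le m\le n$), since $a_{k+1}/a_k=\frac{m-k}{(k+1)(n-k)}\le \frac{m}{n}\le 1$. For an alternating sum of a decreasing sequence, one has the standard bracketing bounds $a_0-a_1\le A(n,m)\le a_0$, giving in particular $A(n,m)\ge a_0-a_1 = n!-m(n-1)! = (n-m)(n-1)!$, which unfortunately is useless when $m$ is close to $n$ (e.g. $m=n$ gives a lower bound of $0$). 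So a sharper argument is needed for large $m$; this is the main obstacle.

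To handle large $m$, I'd instead group the tail in pairs starting from $a_1$: write $A(n,m)=a_0 - \sum_{k\ge 1, k\text{ odd}}(a_k - a_{k+1})$ and bound $a_1$ alone, then observe the total negative contribution is at most $a_1 = m(n-1)! \le n\cdot(n-1)! = n!$, which is still too weak. The right approach is probably to compare $A(n,m)$ with $A(n,n)$, the number of derangement-like permutations where all $n$ positions are forbidden a prescribed distinct value; since forbidding more values only removes permutations, $A(n,m)\ge A(n,m+1)\ge \dots \ge A(n,n)$, so it suffices to show $A(n,n)\ge n!/3$. But $A(n,n)$ is exactly the number of permutations $\sigma$ with $\sigma(i)\ne v(i)$ for all $i$; since $v$ is a valid colouring (a permutation), relabelling shows $A(n,n)=D_n$, the number of derangements, and $D_n/n! \to 1/e > 1/3$ with $D_n/n!\ge 1/3$ for all $n\ge 1$ (one checks the small cases $n=1,2$ separately, where $D_1=0$ — so actually one needs $m\le n-1$, or the "assume $n$ large enough" and "$|I|$-based" framing to sidestep $m=n$; in the application $c(i)\ne v(i)$ so there is a $0$-query, meaning $A(n,m)\ge 1$, but for the claim as stated I'd invoke $D_n/n!$ monotone increasing to $1/e$ for $n\ge 2$, hence $\ge D_2/2! = 1/2 > 1/3$).

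So the clean writeup is: (i) monotonicity $A(n,m)\ge A(n,n)$ by a containment-of-permutation-sets argument; (ii) identify $A(n,n)$ with the derangement number $D_n$ via the fact that $v$ is a permutation; (iii) quote/derive $D_n \ge n!/3$ for $n\ge 2$ (e.g. from $D_n = n!\sum_{k=0}^n (-1)^k/k!$ and the alternating-series bound $\sum_{k=0}^n(-1)^k/k!\ge 1 - 1 + 1/2 - 1/6 = 1/3$). The upper bound needs no work. I expect step (i) — getting uniformity in $m$ rather than a bound that degrades as $m\to n$ — to be the only place requiring a genuine idea, and the derangement reduction is what makes it painless.
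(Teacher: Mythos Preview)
Your proposal is correct, and it takes a genuinely different route from the paper's proof. The paper argues directly from the Bonferroni inequality: truncating the inclusion--exclusion at $k=3$ gives
\[
A(n,m)\ \ge\ \sum_{k=0}^{3}(-1)^k\binom{m}{k}(n-k)!\ =\ \Bigl(1-\tfrac{m}{n}\Bigr)n! + \tfrac{m(m-1)}{6n(n-1)}\Bigl(3-\tfrac{m-2}{n-2}\Bigr)n!,
\]
and then checks by elementary monotonicity in $m$ that this expression is at least $n!/3$. Your argument instead exploits the monotonicity $A(n,m)\ge A(n,m+1)$ (forbidding an extra position can only shrink the set of $0$-queries, and a valid colouring on $I$ extends to a permutation of $[n]$), reducing to $A(n,n)=D_n$ and invoking the classical $D_n\ge n!/3$ for $n\ge 2$. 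Your route is cleaner and more conceptual: it replaces an explicit four-term algebraic estimate by a one-line containment argument plus a standard derangement bound. The paper's approach, on the other hand, is self-contained and avoids appealing to derangements as an outside fact. Both handle the degenerate case $n=1$ only via the standing ``$n$ large enough'' assumption.
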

	
	\begin{proof}
		Since \(A(n,m)\) is the size of a subset of permutations of \([n]\), the upper bound
		is trivial. For the lower bound recall that the expression of \(A(n,m)\) stems from an inclusion-exclusion argument. As it is well known, that inclusion-exclusion alternatively over- resp.\ underestimates we get
		\begin{align*}
			A(n,m) \ge \sum_{k = 0}^{3}{ (-1)^k { m \choose k} (n-k)! }.
		\end{align*}
		By spelling out these terms and grouping them appropriately we get $A(n,m) \ge (1-\frac{m}n)n! + \frac{m(m-1)}{6n(n-1)} \left( 3-\frac{m-2}{n-2} \right)n!$, from which the claimed bound follows easily by observing that the last bracket is at least two and the remaining terms are monotonous in $m$.	
	\end{proof}
	
	Using the above, we now find expressions for the sizes of \(|S_i|, |S_i \cap S_j|\).
	\begin{claim}
		\label{clm:sizes}
		For all \(i \ne j \in I\), we have 
		\begin{enumerate}[label=(\roman*)]
			\item \(|S_i| \ge (1 - o(1)) A( n, |I| ) / n\); \label{clm_itm:si_size}
			\item \(|S_i \cap S_j| \le (1 + o(1)) A( n, |I| ) / n(n-1) \). \label{clm_itm:sisj_size}
		\end{enumerate}
	\end{claim}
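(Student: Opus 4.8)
The idea is to reduce both quantities to counting $0$-queries of $v$ on slightly modified index sets, so that Claim~\ref{clm:A} applies. Let me set $m = |I|$.

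For part \ref{clm_itm:si_size}: $S_i$ is the set of permutations $\sigma$ with $\sigma(i) = c(i)$ that are $0$-queries for $v$ on all of $I$. Since $c(i) \ne v(i)$ (this is the hypothesis on the triple), the condition $\sigma(i) = c(i)$ is compatible with $\sigma$ being a $0$-query at $i$. So I want to count permutations sending $i \mapsto c(i)$ and avoiding $v(j)$ at position $j$ for every $j \in I \setminus \{i\}$. Conditioning on $\sigma(i) = c(i)$ leaves a uniform permutation of the remaining $n-1$ positions onto the remaining $n-1$ colours; among the $m-1$ forbidden pairs $(j, v(j))$, $j \in I\setminus\{i\}$, at most one is "spoiled" (namely if $c(i) = v(j)$ for some $j$, that constraint is automatically satisfied). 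So $|S_i|$ equals the number of $0$-queries of a valid colouring on either $m-1$ or $m-2$ positions among $n-1$ positions, which by the $A(\cdot,\cdot)$ formula is $A(n-1, m-1)$ or $A(n-1, m-2)$. I then need $A(n-1, m') = (1 - o(1)) A(n, m)/n$; this follows because the ratio $A(n,m)/((n-1)! )$ and $A(n-1,m-1)/(n-1)!$ are both $1 - \Theta(m/n)$ up to lower order terms — more precisely, expanding the inclusion–exclusion sums termwise shows $A(n-1,m-1) = A(n,m)/n \cdot (1 + O(1/n))$ uniformly. The clean way is: $n \cdot A(n-1, m-1)$ counts pairs (colour at position with special treatment, permutation), and a direct manipulation of the alternating sums gives the comparison; I would just verify $A(n,m) - n\,A(n-1,m-1)$ telescopes to something of size $O((n-1)!)$.

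For part \ref{clm_itm:sisj_size}: $S_i \cap S_j$ consists of $0$-queries $\sigma$ of $v$ on $I$ with $\sigma(i) = c(i)$ and $\sigma(j) = c(j)$. If $c(i) = c(j)$ then $S_i \cap S_j = \emptyset$ and there is nothing to prove. Otherwise, fixing the two values at $i$ and $j$ leaves a uniform permutation of $n-2$ positions onto $n-2$ colours; the constraints at the remaining $m-2$ positions of $I$ are $0$-query constraints, some of which may be vacuous (if $c(i)$ or $c(j)$ coincides with some $v(k)$). So $|S_i \cap S_j|$ is $A(n-2, m')$ for some $m' \in \{m-2, m-3, m-4\}$, hence by Claim~\ref{clm:A} it is at most $(n-2)! \le (1+o(1)) A(n,m)/(n(n-1))$, again using the lower bound $A(n,m) \ge n!/3$ from Claim~\ref{clm:A} only gives a constant, so I actually need the matching upper bound $A(n-2,m') \le (n-2)!$ together with $A(n,m) \ge (1-o(1))\,\text{something}$ — but in fact the cleanest route is the same as above: show $A(n,m) = n(n-1) A(n-2, m-2)(1 + O(1/n))$ by comparing alternating sums, and note dropping to $m-3$ or $m-4$ only changes things by a further $(1+O(1/n))$ factor since consecutive values $A(n-2, \cdot)$ differ multiplicatively by $1 + O(1/n)$.

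**Expected main obstacle.** The bookkeeping lies in (a) correctly identifying how many of the forbidden pairs $(j, v(j))$ become vacuous when we pin down $\sigma$ at $i$ (and at $j$), which depends on whether $c(i)$, $c(j)$ land on some $v(k)$ with $k \in I$ — a case analysis that is harmless because each such coincidence only shifts the relevant parameter $m$ by one and $A(n', m')$ is insensitive to $O(1)$ changes in $m'$ up to $(1+o(1))$ factors; and (b) proving the comparison $A(n,m) = n \cdot A(n-1,m-1)\,(1+o(1))$ uniformly over $m \le n$. For (b), rather than estimating $A$ directly, I would use the identity obtained by splitting the defining alternating sum according to whether the special position is among the "deranged" ones, namely $A(n,m) = n\,A(n-1, m-1) - (\text{correction})$ where the correction is $m \cdot (\text{something of order } (n-1)!)$ — wait, that would be too big; the correct identity is cleaner. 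In fact $A(n,m)$ counts permutations avoiding $v$ on $I$; removing the colour $v(1)$ entirely and position $1$ gives permutations on $n-1$ points avoiding $v$ on $I \setminus \{1\}$, but the colour $v(1)$ must go somewhere among positions $2,\dots,n$, and it is forbidden from no position in $I\setminus\{1\}$ — so summing over where $v(1)$ goes, $A(n,m) = (n-1) A(n-1, m-1) + A(n-1,m-1) = $ hmm. I will work out the exact recursion; the point is that such a one-step recursion for $A$ exists and immediately yields the $(1+o(1))$ comparisons needed, after which Claims~\ref{clm:A} (applied at $n-1$ and $n-2$) and a line of arithmetic finish both parts.
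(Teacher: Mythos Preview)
Your reduction is correct and the overall route is viable, but it is genuinely different from the paper's, and the step you yourself flag as the obstacle---the comparison \(A(n-1,m')=(1+o(1))A(n,m)/n\)---is never actually carried out; your two attempts at a recursion are both garbled.

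The paper does \emph{not} identify \(|S_i|\) with an \(A(n-1,\cdot)\)-count at all. Instead it uses a swap argument: for each \(0\)-query \(\sigma\) of \(v\), let \(\ell\) be the unique position with \(\sigma(\ell)=c(i)\); swapping positions \(i\) and \(\ell\) produces an element of \(S_i\) unless \(\ell\in I\) and \(\sigma(i)=v(\ell)\), a bad event that occurs for at most \(n(n-2)!\) permutations. Since this swap map is at most \(n\)-to-one onto \(S_i\), one gets \(n|S_i|\ge A(n,|I|)-o(n!)\). Part~\ref{clm_itm:sisj_size} is handled analogously by a double swap in the other direction. No information about \(A\) beyond Claim~\ref{clm:A} is needed.

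Your approach instead computes \(|S_i|\in\{A(n-1,m-1),A(n-1,m-2)\}\) and \(|S_i\cap S_j|\in\{A(n-2,m-2),A(n-2,m-3),A(n-2,m-4)\}\) exactly, which is clean, but then all the work sits in comparing \(A\) across different parameters. The recursion you want (and did not find) is obtained by conditioning on \(\sigma(i)\) for a fixed \(i\in I\): either \(\sigma(i)\notin v(I)\) (\(n-m\) choices, \(m-1\) live constraints remain) or \(\sigma(i)=v(j)\) for some \(j\in I\setminus\{i\}\) (\(m-1\) choices, and the constraint at \(j\) becomes vacuous, leaving \(m-2\)). This gives
\[
A(n,m)=(n-m)\,A(n-1,m-1)+(m-1)\,A(n-1,m-2)=(n-1)A(n-1,m-1)+(m-1)\bigl(A(n-1,m-2)-A(n-1,m-1)\bigr).
\]
Since \(0\le A(n-1,m-2)-A(n-1,m-1)\le (n-2)!\) (the difference counts permutations hitting one fixed constraint), the last term is at most \((n-1)!\), and Claim~\ref{clm:A} then yields \(A(n,m)=n\,A(n-1,m-1)(1+O(1/n))\) uniformly in \(m\). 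Iterating once more handles \(A(n-2,\cdot)\), and the same difference bound shows shifting \(m'\) by one or two changes \(A(n-2,m')\) by \(O((n-3)!)=o(A(n-2,m-2))\). With this in hand your argument closes; without it, the proposal is incomplete. The paper's swap argument buys you exactly the shortcut around this recursion.
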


	\begin{proof}
		We prove this claim by appropriate double counting arguments. For a 
		permutation $\sigma$ that is a $0$-query for $v$ we denote by $\ell$ the index such that $\sigma(\ell)=c(i)$. If $\ell\not\in I$ or  $\sigma(i) \not= v(\ell)$ then the permutation that is obtained from $\sigma$ by swapping the colours at positions $i$ and $\ell$ belongs to $S_i$. (Note that this case includes the case $i=\ell$, as then $\sigma$ belongs to $S_i$ already.) As there exist only $n(n-2)!$ permutations for which $\sigma(i) = v(\ell)$, we thus have 
		\begin{align*}
			A(n, |I|) - o\left(n!\right) \le n |S_i|.
		\end{align*} 
		
		For \ref{clm_itm:sisj_size}, we first note that \(S_i \cap S_j = \emptyset\) when \(c(i) = c(j)\), thus in this case the claim holds trivially. In the following we thus assume \(c(i) \ne c(j)\). Consider an arbitrary  \(\sigma \in S_i \cap S_j\). For some (arbitrary) indices $\ell_1$ and $\ell_2$,  perform the following operation: swap the colours at position $i$ and $\ell_1$ and at positions $j$ and $\ell_2$. When do we get a $0$-query for $v$? Clearly, a sufficient condition is to ensure that $\ell_1$ and $\ell_2$ are different from $i$ and $j$, the colours of $\sigma(\ell_1)$ and $\sigma(\ell_1)$ do not coincide with $v(i)$ resp.\ $v(j)$, and -- for $\ell_1, \ell_2\in I$  -- the colors of $\sigma(i)$ and $\sigma(j)$ does not coincide with that of
		$v(\ell_1)$ resp.\ $v(\ell_2)$. Clearly,  we thus have at least $(n-4)(n-5)$ proper choices for $\ell_1$ and $\ell_2$. Observe in addition that for each permutation $\sigma'$ that is obtained by such a double swap of $i,\ell_1$ and $j,\ell_2$, there is a unique \((\sigma, \ell_1, \ell_2)\) from which it can be obtained: \(\ell_1\) (resp.\ \(\ell_2\)) is the index such that \(\sigma'(\ell_1) = c(i)\) (resp.\ \(\sigma'(\ell_2) = c(j)\)) and \(\sigma\) is obtained from \(\sigma'\) by swapping the colours of \(i,j\) with those of \(\ell_1, \ell_2\) respectively. Thus we have
		\begin{align*}
			(n-4)(n-5) |S_i \cap S_j| \le A(n, |I|).
		\end{align*}
				
		The formulas as in \ref{clm_itm:si_size} and \ref{clm_itm:sisj_size} follow immediately from the above and the bounds of Claim~\ref{clm:A}, where we catch the change to $n(n-1)$ in the $o(\cdot)$-notation. 
	\end{proof}
		
	Putting the expressions of \(|S_i|, |S_i \cap S_j|\) from Claim~\ref{clm:sizes} into \eqref{eq:in_ex_expression_usi} gives 
	\begin{align*}
		\left| \bigcup_{i \in I}{ S_i } \right|
			&\ge \sum_{i \in I}{ |S_i| } - \sum_{i \ne j \in I}{ |S_i \cap S_j| } \\
			&\ge |I| \frac{A(n,|I|)}{n} - {I \choose 2} \frac{A(n, |I|)}{n(n-1)} + o\left(|I|(n-1)! \right) \\
			&= \frac{|I|}{n} A( n, |I| ) \left( 1 - \frac{|I|-1}{2(n-1)} \right) + o\left( |I|(n-1)! \right) \\ 
			&\ge ( 1/6 + o(1) ) |I| (n-1)! & \text{by Claim~\ref{clm:A}.}
	\end{align*}
	For a choice of \(n\) large enough, this is at least \(|I|(n-1)!/7\) as claimed.
\end{proof}

\bibliographystyle{abbrv}
\bibliography{mastermind}

\begin{thebibliography}{1}

\bibitem{chvatal1983mastermind}
V.~Chv{\'a}tal.
\newblock Mastermind.
\newblock {\em Combinatorica}, 3(3-4):325--329, 1983.

\bibitem{doerr2016playing}
B.~Doerr, C.~Doerr, R.~Sp{\"o}hel, and H.~Thomas.
\newblock Playing mastermind with many colors.
\newblock {\em Journal of the ACM (JACM)}, 63(5):1--23, 2016.

\bibitem{el2018query}
M.~El~Ouali, C.~Glazik, V.~Sauerland, and A.~Srivastav.
\newblock On the query complexity of black-peg ab-mastermind.
\newblock {\em Games}, 9(1):2, 2018.

\bibitem{erdos1963two}
P.~Erd\H{o}s and A.~R{\'e}nyi.
\newblock On two problems of information theory.
\newblock {\em Magyar Tud. Akad. Mat. Kutat{\'o} Int. K{\"o}zl}, 8:229--243,
  1963.

\bibitem{glazik2021bounds}
C.~Glazik, G.~J{\"a}ger, J.~Schiemann, and A.~Srivastav.
\newblock Bounds for the static permutation mastermind game.
\newblock {\em Discrete Mathematics}, 344(3):112253, 2021.

\bibitem{jiang2019metric}
Z.~Jiang and N.~Polyanskii.
\newblock On the metric dimension of cartesian powers of a graph.
\newblock {\em Journal of Combinatorial Theory, Series A}, 165:1--14, 2019.

\bibitem{knuth1976computer}
D.~E. Knuth.
\newblock The computer as master mind.
\newblock {\em Journal of Recreational Mathematics}, 9(1):1--6, 1976.

\bibitem{martinsson2020mastermind}
A.~Martinsson and P.~Su.
\newblock Mastermind with a linear number of queries.
\newblock {\em arXiv preprint arXiv:2011.05921}, 2020.

\bibitem{shapiro1960e1399}
H.~S. Shapiro and N.~Fine.
\newblock E1399.
\newblock {\em The American Mathematical Monthly}, 67(7):697--698, 1960.

\end{thebibliography}

\end{document}